\documentclass{amsart}
\usepackage{amssymb}
\usepackage{amsthm}
\usepackage[utf8]{inputenc}
\usepackage{epsfig}
\usepackage{graphicx}
\usepackage{epstopdf}
\usepackage[usenames]{color}
\usepackage{xcolor}

\newcommand\mut[1]{\ignorespaces}

\newtheorem{thm}{Theorem}
\newtheorem{lem}[thm]{Lemma}

% En castellano.

\def\F{\mathbb{F}}

\def\N{\mathbb{N}}
\def\Z{\mathbb{Z}}

\renewcommand\paragraph[1]{\subsection*{#1}}

\title{Factoring numbers with elliptic curves}
\keywords{Integer actorization, Elliptic curves, 11A51, 11G07}

\author{Jorge Jim\'enez Urroz }
\address{Departamento de Matem\'aticas \\
Universidad Polit\'ecnica de  Madrid\\
Madrid, Spain}
\email{jorge.urroz@upc.edu, }
%\thanks{The first author is supported by  the Spanish Government Grant PID2019-110224RB-I00.
%}
\author{Jacek Pomyka{\l}a}
\address{Institute of Mathematics, Faculty of Mathematics, Informatics and Mechanics \\
University of Warsaw \\
Poland}
\email{pomykala@mimuw.edu.pl}

\begin{document}
\begin{abstract} In the present paper we provide a probabilistic polynomial time algorithm that reduces the complete factorization of any squarefree integer $n$ to counting points on elliptic curves modulo $n$, succeeding with probability $1-\varepsilon$, for any $\varepsilon >0$.
\end{abstract}
\maketitle

\section{introduction}
The importance of factorization is something already we all agree. Since the creation of RSA cryptosystem, and its strong consequences in society, not only mathematicians, but even the public community follow the updates in the problem with great enthusiasm. The security of our systems depend heavily in the difficulty of finding the factors of certain known integer $n$.

\

Despite the efforts of the best mathematicians in the area, the fastest  algorithms for factoring, the general number field sieve,  suceeds  in finding the factors only in subexponential time which, in particular, makes nowadays extremely hard  to factor numbers of $1024$ digits. It would need more than $1$ year of continuos working of $1,5$ million computers with $2.1$Ghz of speed. And still, given the delicacy of the matter, the NIST recommends keys of $2048$ bits at least since $2015$.

\

One way of dealing with the factorization is to find problems with equivalent difficulty, i.e. mathematical problems which solutions would give a polynomial time algorithm to factor integer numbers. The first of these instances is due to Miller \cite{miller}, who proved under GRH that factoring an integer $n$ is equivalent to compute the Euler function $\varphi(n)$.

\

Since then, there have been many attempts to find equivalent problems. Bach, et all  in \cite{bach1} relate factorization of an integer $n$ with the problem of counting the sum of the divisors of $n$,  in  \cite{bach2} consider the computation of cyclotomic polynomials, and in  \cite{bach3} provides a probabilistic polynomial time algorithm to reduce factorization to  the discrete logarithm problem on composite integers $n$. It is worth to note that in all these problems, isolating certain multiple of $(p-1)$ is of interest, for some $p|n$. 

\

In $1987$ the strategy changes with the introduction of elliptic curves into the problem with the extraordinary paper by Lenstra \cite{lenstra}, in which he produces an algorithm to factor the integer $n$ by using the number of points of an elliptic curve modulo $n$. Since then, many articles relate elliptic curves with factorization. In \cite{japos} the authors prove that counting points on elliptic curves modulo $n$ is randomly computationally equivalent to factoring $n$, assuming somehow uniform distribution of the order of points of random elliptic curves over the Hasse interval. In \cite{se-pa-jo} the authors relate the problem of factoring with computing the order of points on elliptic curves modulo $n$ and give a random polynomial time reduction between both problems, while  \cite{oka-uchi} provides an algorithm reducing in polynomial time factorization to  the computation of the exponent of the group of points $E(\Z/n\Z)$ for elliptic curves modulo $n$.

\

In \cite{lu-jo} the authors prove a deterministic algorithm to reduce the factorization of any RSA modulus, $n$, to counting affine points on elliptic curves modulo $n$. For general squarefree the authors of  \cite{dr-po}, among other things, give an algorithm reducing  the factorization of $n$ to counting points on elliptic curves modulo $n$, valid for $n$ outside an exceptional set depending on the number of prime factors of $n$. For the particular  case of RSA modulus this set is of size $o(x \log\log x/\log x)$. The argument is based on the idea of finding traces of elliptic curves coprime to $p+1$ for some prime factor $p|n$.  However, to exploit the idea,  they also need to assume uniform distribution of the traces of elliptic curves, in a particular way as $p$ varies. 

\

The goal of this paper is to give a probabilistic polynomial time  algorithm succesfull with probability $1-\varepsilon$ for any $\varepsilon>0$, valid for all squarefree integers $n$ with no exception. Concretely we prove the following result.
\begin{thm}\label{thm:main} Let $n$ be a squarefree integer. Then,  assuming GRH, counting the number of points on elliptic curves modulo $n$ allows to find the complete factorization of $n$ with probability bigger than $1-\varepsilon$ for any $\varepsilon>0$.
\end{thm}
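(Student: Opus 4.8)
The plan is to reduce a single nontrivial split of $n$ to an ECM-style computation in which the group order is supplied by the point-counting oracle, and then to recurse. Fix a squarefree $n=\prod_{i=1}^{k}p_i$ with unknown factorization. Writing $N_p(E)=\#E(\F_p)=p+1-a_p(E)$ for the order of the reduction of $E$ at a prime $p\mid n$, the Chinese Remainder Theorem identifies $E(\Z/n\Z)\cong\prod_i E(\F_{p_i})$, so the oracle returns $M:=\#E(\Z/n\Z)=\prod_i N_{p_i}(E)$, a common multiple of the orders of all points of $E$ modulo $n$. I would generate a random curve together with a free rational point by the usual device of choosing $x_0,y_0,a$ at random in $\Z/n\Z$ and setting $b=y_0^2-x_0^3-ax_0$, so that $P=(x_0,y_0)$ lies on $E:y^2=x^3+ax+b$; by CRT this induces independent uniform choices of $(E,P)$ over each $\F_{p_i}$.

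Given $M$ and $P$, I would try to split $n$ by a valuation-descent on small primes. For each prime $\ell\le B=\mathrm{poly}(\log n)$ dividing $M$ (found by trial division of $M$, the only part of $M$ we can afford to factor), write $M=\ell^{e}m$ with $\ell\nmid m$, put $R=[m]P$, and form the chain $R,[\ell]R,[\ell^2]R,\dots$. Since $[\ell^{j}]R\equiv O\pmod{p_i}$ exactly when $j\ge w_i:=v_\ell(\mathrm{ord}_{p_i}P)$, the first index at which the chain becomes the identity modulo some but not all of the $p_i$ produces, via $\gcd$ with $n$ of the projective $Z$-coordinate (equivalently, of a denominator whose inversion modulo $n$ fails), a nontrivial factor. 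Thus a single trial succeeds as soon as some small $\ell$ makes the valuations $w_i$ not all equal; repeating over $O(\log(1/\varepsilon))$ independent trials (and over all $\ell\le B$) amplifies this, and recursing on the two factors --- at most $k\le\log_2 n$ splits --- yields the complete factorization after a union bound on the failure probabilities.

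Everything therefore reduces to a uniform lower bound, \emph{independent of $n$ and with no exceptional set}, on the single-trial success probability. The relevant events are governed by the distribution of $N_p=p+1-a_p$ modulo small primes: $\ell\mid N_p$ amounts to $a_p\equiv p+1\pmod\ell$, i.e.\ to $E$ having rational $\ell$-torsion at $p$, an event of density $\asymp 1/\ell$ among curves over $\F_p$, and by CRT these events are independent across the $p_i$. A random point then has $w_i\ge 1$ with constant probability whenever $\ell\mid N_{p_i}$, so in the critical case $k=2$ it suffices that with positive probability some $\ell\le B$ divides exactly one of $N_{p_1},N_{p_2}$, or divides both but to orders separated by the random point. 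I would deduce this from an effective equidistribution statement for the Frobenius trace in the family of curves over $\F_p$, uniform in $p$, which is precisely what GRH supplies (via effective Chebotarev on the mod-$\ell$ division fields / short-interval control of the class-number weighted trace counts).

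The main obstacle is exactly this last uniformity: \cite{dr-po} obtained the same kind of reduction only outside an exceptional set and under an unproven hypothesis on the distribution of traces, and the whole point is to replace that hypothesis by GRH while covering every squarefree $n$. The delicate case is $n=pq$ with $N_p$ and $N_q$ ``synchronized'' --- equal, with matching $\ell$-Sylow structure for all small $\ell$ --- since then neither the oracle (which sees only the product) nor the descent can separate the primes. I expect the crux to be showing that, because $p\ne q$ forces the two trace distributions to be essentially independent, the probability that no prime $\ell\le B$ desynchronizes $N_p$ and $N_q$ decays below any prescribed $\delta$, so that $O(\log(1/\varepsilon))$ trials drive the total failure probability under $\varepsilon$.
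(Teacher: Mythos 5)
Your route is genuinely different from the paper's, so it is worth recording the difference. The paper never uses the group structure of $E(\Z/n\Z)$ or any rational point: it draws a random curve $E$ and hopes that at some prime $p_1\mid n$ the trace satisfies $\gcd(a_{p_1},p_1+1)\le D=(\log n)^{O(1)}$, an event of probability $\gg 1/(\log p_1)^2$ by Lenstra's unconditional count of isomorphism classes (Proposition 1.9 of \cite{lenstra}); it then calls the oracle on $E$ and on every quadratic twist $E^{d}$ with $d\ll(\log n)^2$. GRH enters only through the Lamzouri--Li--Soundararajan bound \cite{sound-lim-li}, which guarantees some small $d$ that is a quadratic non-residue modulo $p_1$ and a residue modulo every other prime factor; for that $d$ the ratio $|E_n|/|E_n^{d}|=(p_1+1-a_{p_1})/(p_1+1+a_{p_1})$, written in lowest terms, recovers $2(p_1+1)$ after trying the polynomially many possible cancelled factors. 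Your proposal instead treats the oracle output $M=\#E(\Z/n\Z)$ as a group exponent and runs an ECM-type valuation descent with the sampled point; this is much closer in spirit to \cite{japos} than to the present paper.

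As written, though, your argument has two genuine gaps. The first is the recursion: the oracle counts points modulo $n$ only, so after splitting $n=n_1n_2$ you cannot call it modulo $n_1$, and simulating a modulo-$n_1$ oracle would require knowing group orders modulo $n_2$, i.e.\ exactly the factorization you do not yet have. This is repairable without any recursion --- keep sampling curves modulo $n$ itself; at step $j$ of the descent the $\gcd$ of the $Z$-coordinate with $n$ equals $\prod_{w_i\le j}p_i$, so a single trial separates a fixed pair $p_i\ne p_j$ whenever $w_i\ne w_j$, and a union bound over the $O((\log n)^2)$ pairs of prime factors finishes --- but the step as you wrote it is not available to you.

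The second and more serious gap is that your crux --- a lower bound, uniform in $p$, on the probability that some small $\ell$ desynchronizes two prime factors --- is not proved but delegated to ``effective Chebotarev on the mod-$\ell$ division fields under GRH.'' That tool addresses the horizontal problem (a fixed curve over $\Q$, varying $p$) and says nothing about your vertical one (a fixed $p$, random curve over $\F_p$). Fortunately, the vertical statement you need is unconditional, and for $\ell=2$ it is elementary: $2\mid N_p$ if and only if $x^3+ax+b$ has a root in $\F_p$, which holds for a proportion $2/3+O(p^{-1/2})$ of pairs $(a,b)$, uniformly in $p$; by CRT these events are independent across the $p_i$; and a uniform affine point on a curve with $2\mid N_p$ has even order with probability at least $1/2$. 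This already gives a single-trial splitting probability near $(2/3)(1/3)(1/2)=1/9$ for any fixed pair, with no GRH anywhere. So the missing lemma is fillable, but not by the theorem you cite; and since filling it in this way would yield a hypothesis-free result --- stronger than the paper's theorem and than \cite{japos} and \cite{dr-po}, which insert hypotheses at precisely this point --- you should write that step out completely and scrutinize it, rather than leave it as an acknowledged expectation.
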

The argument also uses the idea of counting traces coprime to $p+1$ where $p|n$, but instead we do not need to assume extra hypothesis on the traces. Instead, our argument uses the unconditional bounds of Lenstra for the distribution of the number of points  as $E$ varies  randomly over all  elliptic curves over $\F_p$. To do so, Lenstra uses the class number formula, and the bounds for the special value $L(1,\chi)$ where $\chi$ is the quadratic character associated to the Frobenius field when varying the curve. In this sense one is tempted to assume that no Siegel zero exists, but this only would  save a logarithm with no effect in the result. On the other hand, we do need to assume  the general Riemann Hypothesis, GRH,  to bound the size of the least non square residue modulo $p$. 

%\section{Proof of the Theorem}
 
\section{Lemmata}
We want to factorize $n=p_1\dots p_r$ an squarefree integer. As usual,  for $E/\Z$ we let  $E_n=\prod_{p|n}E_p$, where $E_p$ is the reduction of $E$ modulo $p$. Clearly we can give to $E_n$ group structure isomorphic to $\prod_{p|n}E_p$ as groups.  We denote $|E_n|$ the number of points of the elliptic curve modulo $n$. Now,   for any  $(d,n)=1$ consider $E_n^d$ the quadratic twist of $E_n$ and let $|E_n^d|$ its number of points modulo $n$. Then,
$$
|E_n|=\prod_{p|n}(p+1-a_p),\quad |E_n^d|=\prod_{p|n}\left(p+1-\left(\frac{d}{p}\right)a_p\right),
$$
where $\left(\frac{d}{p}\right)$ is the Legendre symbol, and $a_p$ is the trace of the Frobenius endomorphism $\varphi_p:E_p\to E_p$ which verifies $|a_p|\le 2\sqrt p$ by Hasse's Theorem \cite{hasse}. We will denote $I_p=[p+1-2\sqrt p,p+1+2\sqrt p]$ the Hasse's interval.  For the proof of the theorem we need the following lemmata.

\begin{lem}\label{lem:quad} Let $m$ be an integer and $p$ a  prime $p\nmid m$. Assuming GRH,  there exist $d\ll (\log mp)^2$ such that $\left(\frac dp\right)=-1$ and $\left(\frac dm\right)=1$.
\end{lem}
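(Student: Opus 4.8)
The plan is to recognize both symbols as quadratic Dirichlet characters and to run an Ankeny-type least-non-residue argument for the joint condition. Write $\chi_p(d)=\left(\frac dp\right)$, the Legendre symbol, a non-principal character modulo $p$, and $\psi(d)=\left(\frac dm\right)$, the Jacobi symbol, which is a real Dirichlet character modulo $m$. Since $p\nmid m$, both $\chi_p$ and the product $\chi_p\psi$ are non-principal characters modulo $mp$. It then suffices to show that the counting function
\[
N(x)=\sum_{\substack{d\le x\\(d,mp)=1}}\frac{(1-\chi_p(d))(1+\psi(d))}{4}
\]
is positive for some $x\ll(\log mp)^2$, since for $d$ coprime to $mp$ the summand equals $1$ exactly when $\chi_p(d)=-1$ and $\psi(d)=1$, and $0$ otherwise; such a $d$ is what the lemma asks for.

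Next I would expand $N(x)$ into four pieces. The term coming from the constant $1$ is the count $\#\{d\le x:(d,mp)=1\}$, which I expect to be $\gg \frac{\varphi(mp)}{mp}x\gg x/\log\log(mp)$. Each of the remaining three sums is an incomplete sum of a character modulo $mp$: after absorbing the coprimality condition $(d,mp)=1$ into the principal character modulo $mp$, the characters $\chi_p$, $\chi_p\psi$ (and $\psi$, when it is itself non-principal) remain non-principal, so the GRH bound on incomplete character sums, $\left|\sum_{d\le x}\chi(d)\right|\ll\sqrt x\,\log(mp)$, shows each of them is $\ll\sqrt x\,\log(mp)$. (If $\psi$ is the principal character modulo $m$, i.e. $m$ is a perfect square, its sum merely reinforces the main term.) Hence $N(x)\gg \frac{\varphi(mp)}{mp}x-O(\sqrt x\,\log mp)$, which becomes positive once $\sqrt x\gg \frac{mp}{\varphi(mp)}\log(mp)$, i.e. for $x$ of size $(\log mp)^2$ up to $\log\log$ factors. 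One may also assume $p>x$, since otherwise $p$ is already of size $(\log mp)^2$ and a factor of $n$ is exhibited directly.

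The delicate point — and the step I expect to be the \emph{main obstacle} — is the lower bound $\#\{d\le x:(d,mp)=1\}\gg \frac{\varphi(mp)}{mp}x$ at the very small scale $x\asymp(\log mp)^2$, well below the usual fundamental-lemma range; this is in fact where the exponent $2$ originates. The resolution is that $mp$ cannot have too many small prime factors, because $\sum_{q\mid mp}\log q=\log(mp)$ forces at most $O(\log mp)$ primes below $x$ to divide $mp$. Consequently the primes in $(\sqrt x,x]$ that do not divide $mp$ already number $\gg x/\log x\asymp x/\log\log(mp)$ and are automatically coprime to $mp$, which furnishes the required lower bound in the extremal case of many small prime factors; when $mp$ has few prime factors the bound is immediate since only a bounded number of residue classes are removed. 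This simultaneously explains why the scale cannot be pushed below $(\log mp)^2$. Combining the sieve lower bound with the character-sum estimates yields $N(x)>0$, hence a $d\ll(\log mp)^2$ with $\left(\frac dp\right)=-1$ and $\left(\frac dm\right)=1$, as claimed; the minor $\log\log$ discrepancies are absorbed into the implied constant, or removed by invoking the sharper Montgomery–Vaughan conditional bound $\ll\sqrt x\,\log\log(mp)$.
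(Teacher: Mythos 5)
Your overall strategy --- writing the indicator of the condition as $\tfrac14(1-\chi_p(d))(1+\psi(d))$, checking non-principality of $\chi_p$ and $\chi_p\psi$ modulo $mp$ (including the edge case where $m$ is a perfect square), applying the GRH bound $\ll\sqrt x\log(mp)$ to the three incomplete character sums, and justifying the main term at very small scales by counting primes in $(\sqrt x,x]$ not dividing $mp$ --- is sound, and it is a genuinely different route from the paper, whose entire proof is a one-line citation of Theorem 1.4 of Lamzouri--Li--Soundararajan applied to the coset $aH$ of the subgroup $H$ of quadratic residues of $(\Z/pm\Z)^*$. However, your endgame has a genuine gap. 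Balancing the main term, which is only $\gg x/\log\log(mp)$, against the error $O(\sqrt x\log(mp))$ forces $\sqrt x\gg\log(mp)\log\log(mp)$, so what you actually prove is $d\ll(\log mp)^2(\log\log mp)^2$, and neither of your proposed repairs of the $(\log\log)^2$ loss works. A factor $(\log\log mp)^2$ grows with $mp$ and cannot be ``absorbed into the implied constant''. And there is no Montgomery--Vaughan bound $\ll\sqrt x\,\log\log(mp)$ for \emph{incomplete} sums: their GRH result is the P\'olya--Vinogradov improvement $\max_x\bigl|\sum_{n\le x}\chi(n)\bigr|\ll\sqrt q\,\log\log q$, which at the scale $x\asymp(\log q)^2$ is far weaker than the trivial bound $x$. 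Indeed, a short-sum bound $\sqrt x\,\log\log q$ cannot hold: it would make the least quadratic non-residue modulo a prime $O((\log\log p)^2)$, contradicting Montgomery's own GRH-conditional lower bound $\gg\log p\,\log\log p$ valid for infinitely many primes. Removing exactly these logarithmic losses (via smoothed sums and Vinogradov's smooth-numbers trick) is the content of the Lamzouri--Li--Soundararajan refinements, which is why the paper cites them rather than rerunning the bare Ankeny argument.

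There is also a mismatch in what is being proved. Your construction yields $d$ with Jacobi symbol $\left(\frac dm\right)=1$, which is the literal statement; the paper's proof, working in a coset of the squares of $(\Z/pm\Z)^*$, yields $d$ that is a quadratic residue modulo $m$, hence satisfies $\left(\frac d{p_i}\right)=1$ for \emph{every} prime $p_i\mid m$. That stronger property is what the proof of Theorem 1 actually uses: it needs $|E_{p_i}^d|=p_i+1-a_{p_i}$ for each $i\ge 2$, i.e.\ each individual Legendre symbol equal to $+1$, and the Jacobi condition $\prod_i\left(\frac d{p_i}\right)=1$ does not give this. So even after repairing the $\log\log$ issue, your index-$4$ subgroup would have to be replaced by the kernel of all the characters $\left(\frac{\cdot}{p_i}\right)$, whose index is $2^{\omega(m)+1}$; the main term then shrinks by that index, and the resulting bound on $d$ degrades accordingly. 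In short: your proof matches the statement as written but not the statement as used, and it delivers a slightly weaker bound than claimed.
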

\begin{proof} This is just a consequence of Theorem 1.4 of \cite{sound-lim-li}, considering the coset $aH$ of the subgroup $H\subset (\Z/pm)^*$ of the quadratic residues modulo $pm$, and  $a$  a quadratic non-residue modulo $p$ and a quadratic residue modulo $m$.
\end{proof} 
\begin{lem} Let $D\in\N$, $p$ a prime number  and  $S_{p,D}$ be the number of isomorphic classes over $\F_p$  of elliptic curves over $\F_p$ with trace $a$ such that the gcd $(a,p+1)\le D$. Then
$$
S_{p,D}\ge C \max\left\{\frac{p}{\log p}-\frac{p}{D\log p}\tau(p+1)-\tau(p+1)^2\frac{\sqrt p}{\log p}, \frac{p}{(\log p)^2}\right\},
$$
for some explicit constant $C$.
\end{lem}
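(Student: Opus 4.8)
The plan is to translate the count of isomorphism classes into a sum of class numbers and then count how many admissible traces survive the condition $\gcd(a,p+1)\le D$. By Deuring's theorem, for an ordinary trace $a$ (so $p\nmid a$ and $|a|<2\sqrt p$) the number of $\F_p$-isomorphism classes of elliptic curves with trace $a$ equals the Hurwitz--Kronecker class number $H(4p-a^2)$. Hence, discarding the outer part of the Hasse interval (which only helps a lower bound), I would work with
$$S_{p,D}\ \ge\ \sum_{\substack{|a|\le \sqrt p\\ \gcd(a,p+1)\le D}} H(4p-a^2).$$

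First I would establish a per-trace lower bound $H(4p-a^2)\gg \sqrt p/\log p$ valid for every $a$ with $|a|\le\sqrt p$. This follows from Dirichlet's class number formula $h\asymp \sqrt{|D|}\,L(1,\chi_D)$ applied to the fundamental discriminant $D_0$ dividing $a^2-4p$: writing $a^2-4p=D_0 f^2$, the non-maximal orders contribute a factor $f\prod_{\ell\mid f}(1-\chi_{D_0}(\ell)/\ell)$, so that $H(4p-a^2)\ge h(a^2-4p)\gg \sqrt{4p-a^2}\,L(1,\chi_{D_0})\prod_{\ell\mid f}(1-1/\ell)$, where in the inner region $4p-a^2\asymp p$ and the Euler product is $\gg 1/\log\log p$. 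The only delicate input is a uniform lower bound $L(1,\chi_{D_0})\gg 1/\log p$; this is exactly the unconditional estimate used by Lenstra, and it is the step where a possible Siegel zero costs one extra logarithm. That dichotomy is what produces the two entries of the maximum: generically one keeps $\sqrt p/\log p$ per trace, while the robust fallback $\sqrt p/(\log p)^2$ --- obtained, for instance, by retaining only the traces coprime to $p+1$, which are automatically admissible and number $\gg \sqrt p/\log p$ --- yields the always-valid term $p/(\log p)^2$.

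The combinatorial heart is counting the admissible traces $T:=\#\{a:\ |a|\le \sqrt p,\ \gcd(a,p+1)\le D\}$. I would split according to $g=\gcd(a,p+1)$, writing $a=ga'$ with $\gcd(a',(p+1)/g)=1$, so that $T=\sum_{g\mid (p+1),\,g\le D}\#\{|a'|\le \sqrt p/g:\ \gcd(a',(p+1)/g)=1\}$. A M\"obius sieve evaluates each inner count as $2(\sqrt p/g)\,\varphi((p+1)/g)/((p+1)/g)+O(2^{\omega(p+1)})$, the error being the sieve remainder, with $2^{\omega(p+1)}\le\tau(p+1)$. Using the identity
$$\sum_{g\mid N}\frac1g\,\frac{\varphi(N/g)}{N/g}=\frac1N\sum_{h\mid N}\varphi(h)=1$$
with $N=p+1$, the complementary tail $g>D$ contributes at most $\sum_{g\mid(p+1),\,g>D}1/g\le \tau(p+1)/D$, while the accumulated sieve error over the $\le\tau(p+1)$ divisors $g$ is $O(\tau(p+1)^2)$. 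This gives $T\ge 2\sqrt p-\tfrac{2\sqrt p}{D}\tau(p+1)-O(\tau(p+1)^2)$, and multiplying by the per-trace bound $\gg\sqrt p/\log p$ reproduces precisely the first entry of the maximum.

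The main obstacle is the uniform lower bound for $H(4p-a^2)$, equivalently for $L(1,\chi_{D_0})$, across all $a$ in the inner interval: a single exceptional (Siegel) modulus would degrade the generic term by one logarithm, and it is precisely to hedge against this that the statement takes the maximum with the unconditional quantity $p/(\log p)^2$. The counting of admissible traces, by contrast, is elementary once the divisor identity and the sieve remainder are in place; the only care needed is to keep the two error mechanisms separate --- the tail of large divisors and the accumulated coprimality remainders --- since they are responsible for the $\tau(p+1)/D$ and the $\tau(p+1)^2$ terms respectively.
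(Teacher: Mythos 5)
Your combinatorial half is essentially the paper's own argument: the M\"obius sieve, the identity $\sum_{g\mid p+1}\varphi((p+1)/g)=p+1$, the tail term $\tau(p+1)/D$, the accumulated remainder $O(\tau(p+1)^2)$, and the fallback count $\gg\sqrt p/\log p$ of traces exactly coprime to $p+1$ all appear there in the same form. The gap is in your analytic input. You build everything on a \emph{per-trace} lower bound $H(4p-a^2)\gg\sqrt p/\log p$, which you reduce to a uniform estimate $L(1,\chi_{D_0})\gg 1/\log p$ valid for every $a$, and you call this ``exactly the unconditional estimate used by Lenstra''. No such unconditional per-discriminant estimate exists. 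What is known effectively and unconditionally (Landau--Page) is that among all real primitive characters of conductor at most $4p$, \emph{all but at most one} satisfy $L(1,\chi)\gg 1/\log p$; for the possible exceptional (Siegel) character the best effective bound is far weaker than a one-logarithm loss --- Goldfeld--Gross--Zagier gives only $h(D_0)\gg(\log|D_0|)^{1-\epsilon}$, i.e.\ $L(1,\chi_{D_0})$ could a priori be nearly as small as $1/\sqrt{|D_0|}$, which wipes out the trace's contribution entirely rather than costing a logarithm. Your hedge also fails structurally: the second entry of the maximum, $p/(\log p)^2$, is \emph{not} a Siegel-zero fallback --- in your own derivation it is (number of coprime traces $\gg\sqrt p/\log p$) times the \emph{same} per-trace bound $\sqrt p/\log p$, so both entries of your maximum rest on the estimate that is unavailable.

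The paper sidesteps this entirely by quoting Lenstra's Proposition 1.9 as a black box: for any set $S$ in the Hasse interval with $|S|\ge 3$, the number of isomorphism classes with $|E_p|\in S$ is at least $c\,|S|\sqrt p/\log p$ (Lenstra's original form has $\#S-2$, which is why $|S|\ge 3$ is required). The aggregate, set-based formulation is precisely what absorbs the exceptional character: only $O(1)$ traces $a$ can share a given exceptional fundamental discriminant $d^*$, since for prime $p$ the equation $a^2-d^*f^2=4p$ has a bounded number of solutions (elements of norm $p$ in an imaginary quadratic order, up to a finite unit group), and the $\#S-2$ allows these to be discarded. With Lenstra's bound in hand, the two entries of the maximum come from the two elementary lower bounds on the number of admissible traces ($\varphi_{p,D}\ge 2\sqrt p-\frac{2\sqrt p}{D}\tau(p+1)-\tau((p+1)^2)$ and $\varphi_{p,D}\ge\varphi_{p,1}\gg\sqrt p/\log p$), both multiplied by the same factor $c\sqrt p/\log p$ --- there is no dichotomy on $L$-functions anywhere in the paper's proof of this lemma. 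Your argument is repaired by replacing your per-trace class-number estimate with Lenstra's set-based one (or by carrying out the Landau--Page exception handling yourself); note also that while your per-trace bound would hold under GRH, the lemma is stated and used unconditionally, so a GRH-conditional proof of it would not suffice.
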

\begin{proof} By Proposition $1.9$ of \cite{lenstra}, we know that given any set $S\subset I_p$ with $|S|\ge 3$, then the number of  isomorphism classes over $\F_p$ of elliptic curves modulo $p$ with number of points in $S$ is bigger than $|S|\frac{c\sqrt p}{\log p}$, for some explicit constant $c$. So we just have to count the number of integers $1\le a\le 2\sqrt p $ such that  $(a,p+1)=d$ for some $d\le D$. Let us call $\varphi_{p,D}$ to this number. Note that the number of traces with $(a_p,p+1)\le D$ is exactly 2$\varphi_{p,D}$ since  $(a,p+1)=d$ if and only if $(-a,p+1)=d$.  Now, 
\begin{eqnarray*}
\varphi_{p,D}&=&\sum_{\substack{d|p+1\\ d\le D}}\sum_{\substack{a\le 2\sqrt p \\ (a,p+1)=d}}1=\sum_{\substack{d|p+1\\ d\le D}}\sum_{\substack{a\le 2\sqrt p/d}}\sum_{k|(a,(p+1)/d)}\mu(k)\\
&=&\sum_{\substack{d|p+1\\ d\le D}}\sum_{\substack{k|(p+1)/d}}\mu(k)\sum_{\substack{a\le \frac{2\sqrt p}{kd}}}1
=\sum_{\substack{d|p+1\\ d\le D}}\sum_{\substack{k|(p+1)/d}}\mu(k)\left[\frac{2\sqrt p}{kd}\right]\\
&=&2\sqrt p \sum_{\substack{d|p+1\\ d\le D}} \frac1{d} \sum_{\substack{k|(p+1)/d}}\frac{\mu(k)}{k}-\sum_{\substack{d|p+1\\ d\le D}}\sum_{\substack{k|(p+1)/d}}\mu(k)\left\{\frac{2\sqrt p}{kd}\right\}\\
%&=&\sqrt p\left(\frac{\varphi(p+1)}{p+1}+\frac{\varphi((p+1)/2)}{p+1}\right)+\sum_{\substack{k|p+1\\2\nmid k}}\mu(k)\left\{\frac{2\sqrt p}{d}+\frac12\right\}\\
&=&\frac{2\sqrt p}{p+1}\sum_{\substack{d|p+1\\ d\le  D}}\varphi((p+1)/d)-
\sum_{\substack{d|p+1\\ d\le D}}\sum_{\substack{k|(p+1)/d}}\mu(k)\left\{\frac{2\sqrt p}{kd}\right\},
\end{eqnarray*}
and noting that 
$$
\sum_{\substack{d|p+1}}\varphi((p+1)/d)=\sum_{\substack{d|p+1}}\varphi(d)=p+1.
$$
we get 
\begin{equation}\label{eq:fi}
\varphi_{p,D}=2\sqrt p-\frac{2\sqrt p}{p+1}\sum_{\substack{d|p+1\\ d>  D}}\varphi((p+1)/d)-
\sum_{\substack{d|p+1\\ d\le D}}\sum_{\substack{k|(p+1)/d}}\mu(k)\left\{\frac{2\sqrt p}{kd}\right\}.
\end{equation}
Using the trivial upper bound $\varphi(n)<n$ we get for the first sum in (\ref{eq:fi}) 
$$
\frac{2\sqrt p}{p+1}\sum_{\substack{d|p+1\\ d>  D}}\varphi((p+1)/d)<{2\sqrt p}\sum_{\substack{d|p+1\\ d>  D}}\frac1d< \frac{2\sqrt p}D\tau(p+1).
$$
%For any integer $m=\prod_{q|m}q^{e_q}$, and $d|m$ we write $d=d_1d_2$ where $d_1$ is the largest divisor of $d$ such that $(m/d_1,(d_1))=1$, or $(m,d_1^2)=d_1$. Then $\varphi(m/d)=\varphi(m/d_2)$

%$=\prod_{q|m}q^{b_q}$ we have $m/d=\prod_{q|m}q^{e_q-b_q}$ and hence $\varphi(m/d)=\prod_{\substack{q|m\\b_q<e_q}}q^{e_q-b_q-1}(q-1)$
The last sum in (\ref{eq:fi})  can be bounded in the following way
 \begin{eqnarray*}
&&\left|\sum_{\substack{d|p+1\\ d\le D}}\sum_{\substack{k|(p+1)/d}}\mu(k)\left\{\frac{2\sqrt p}{kd}\right\} \right|\le \sum_{\substack{d|p+1\\d\le D}} 2^{\omega((p+1)/d)}=\sum_{\substack{d|p+1\\d\ge (p+1)/D}} 2^{\omega(d)}\\
&\le &\prod_{q^\alpha||p+1}\sum_{\substack{d|q^\alpha}} 2^{\omega(d)}=\prod_{q^\alpha||p+1} (1+2\alpha)=\tau((p+1)^2).
\end{eqnarray*}
%Using Theorem $4$ of \cite{jorge} with $\varepsilon=0.4$ we get $\tau(n)<n^{\frac{1.1}{\log\log n}}<n^{1/2}$ for any $n>9000$ and  $\tau(n^2)<n^{\frac{2.2}{\log(2\log n)}}<n^{1/2}$ for any $n>5\times10^7$ and taking $D=4$, we get 
%$$
%\varphi_{p,D}>
%$$
and hence
\begin{equation}\label{eq:upb1}
\varphi_{p,D}\ge 2\sqrt p-\frac{2\sqrt p}D\tau(p+1)-\tau((p+1)^2).
\end{equation}
On the other hand, we trivially have 
$$
\varphi_{p,D}\ge \varphi_{p,1}
$$
and 
\begin{eqnarray*}
\varphi_{p,1}&=&\sum_{\substack{a\le 2\sqrt p \\ (a,p+1)=1}}1=\sum_{\substack{a\le 2\sqrt p \\ (a,p+1)=1\\2\nmid a}}1=\sum_{\substack{a\le 2\sqrt p\\ 2\nmid a}}\sum_{k|(a,p+1)}\mu(k)=\sum_{\substack{k|p+1\\2\nmid k}}\mu(k)\sum_{\substack{a\le \frac{2\sqrt p}{k}\\ 2\nmid a}}1\\
&=&\sum_{\substack{k|p+1\\2\nmid k}}\mu(k)\left[\frac{\sqrt p}{k}+\frac12\right]=\sqrt p\sum_{\substack{k|p+1\\2\nmid k}}\frac{\mu(k)}{k}+\frac12\sum_{\substack{k|p+1\\2\nmid k}}\mu(k)+\sum_{\substack{k|p+1\\2\nmid k}}\mu(k)\left\{\frac{2\sqrt p}{d}+\frac12\right\}\\
%&=&\sqrt p\left(\frac{\varphi(p+1)}{p+1}+\frac{\varphi((p+1)/2)}{p+1}\right)+\sum_{\substack{k|p+1\\2\nmid k}}\mu(k)\left\{\frac{2\sqrt p}{d}+\frac12\right\}\\
&=&\sqrt p\frac{\varphi{(P)}}{P}+\sum_{k|P}\mu(k)\left\{\frac{2\sqrt p}{d}+\frac12\right\}
\end{eqnarray*}
where $p+1=2^kP$ with $2\nmid P$. As before, for the last sum we have trivially the upper bound 
$$
\left|\sum_{k|P}\mu(k)\left\{\frac{2\sqrt p}{d}+\frac12\right\}\right|\le 2^{\omega(P)}
$$
since the sum only counts squarefree integers, and hence
\begin{equation}\label{eq:upb2}
\varphi_{p,D}\ge \sqrt p \frac{\varphi(P)}{P}-2^{\omega(P)}.
\end{equation}

Now, if $\omega(P)=l$, then $P\ge \prod_{p\le p_l}p$. From Corollary $2$ of Rosser and Shoenfeld \cite{rosho} we obtain that $p_l>el$ for any $p_l>113$, so in particular $\prod_{p\le p_l}p\ge l^l$. Indeed we just need Maple to check it is true for $13 \le l\le 31$ and for $l>31$ we will prove it by induction. So, suppose it is true that $\prod_{p\le p_{l-1}}p\ge (l-1)^{(l-1)}$. Then, 
$$
\prod_{p\le p_{l}}p=p_l\prod_{p\le p_{l-1}}p\ge p_l(l-1)^{(l-1)}>el(l-1)^{(l-1)},
$$
but since $a_l=(1+\frac1l)^l$ is an increasing sequence with $\lim_{n\to\infty}a_l=e$, we have 
$$
el(l-1)^{(l-1)}>l(1+\frac1{l-1})^{l-1}(l-1)^{(l-1)}=l^l
$$
Hence, taking $l=16$ we see than $P\ge l^l>16^l=(2^l)^4$ so, in particular,  we have $P>(2^{\omega(P)})^4$ and
$$
\varphi_{p,1}> \sqrt p \frac{\varphi(P)}{P}-p^{1/4}\ge \frac{\sqrt p}{8\log p},
$$
for any $p\ge 10^8$, where we have used the trivial inequality $\varphi(n)>\frac{n}{4\log n}$.
\end{proof}

\section{Proof of Theorem \ref{thm:main}}

Let $D\le (\log n)^k$ for some fixed $k$.  We select an elliptic curve $E_n:=y^2=x^3+ax+b$  modulo $n$ at random. Suppose $(a_{p_1},p_1+1)=d_E\le D$. Take $d$ as in Lemma \ref{lem:quad}, for $p=p_1$ and $m=n/p_1$. Then, $|E_{p_1}^d|=p_1+1+a_{p_1}$ while 
$|E_{p_i}^d|=p_i+1-a_{p_i}$ for any $i=2\dots r$ and, hence
$$
\frac{|E_n|}{|E_n^d|}=\frac{p_1+1-a_{p_1}}{p_1+1+a_{p_1}}.
$$
Let $\frac{|E_n|}{|E_n^d|}=\frac ab$ where $(a,b)=1$. Then
$$
\frac ab=\frac{\frac{p_1+1-a_{p_1}}{d}}{\frac{p_1+1+a_{p_1}}{d}},
$$
and hence $a=(p_1+1-a_{p_1})d$, $b=(p_1+1+a_{p_1})d$. So we just have to mulptiply $a$ by all the integers up to $D$ in order to recover $p_1+1-a_{p_1}$  and mulptiply $b$ by all the integers up to $D$ to recover $p_1+1+a_{p_1}$. Adding both number we get $2(p_1+1)$ and, hence, we have found a factor of $n$.

\

If the algorithm does not return $p_1$, in $(\log n)^k$ steps, then stop, and select a new elliptic curve $E'_n:=y^2=x^3+Ax+B$ non isomorphic to $E_n$. Note that  we can assume $A\not\equiv 0\pmod p$ neither $B\not\equiv 0\pmod p$ for $p|n$ since otherwise we find a factor of $n$.  Then, if we would get an elliptic curve isomorphic to $E_n$ over $\F_p$ for some $p|n$ then $\lambda^4A\equiv a\pmod p$ and $\lambda^6B\equiv b\pmod p$, so $\lambda^4\equiv \frac{a}{A}$ and $\lambda^6\equiv \frac{b}{B}\pmod p$, or $\lambda^2\equiv \frac{bA}{aB} \pmod p$ and then $b^2A^3-a^3B^2\equiv 0\pmod p$ and we would find a factor.

\

Now, repeating the process $k(\log n)^{2}$ times, for  non isomorphic classes of elliptic curves, the probality that none of them has the trace $a$ so that  $(a,p+1)\le D$ is bounded above by
$$
\left(1-\frac{c}{(\log p)^2}\right)^{k(\log n)^{2}}<\ \left(1-\frac{c}{(\log p)^2}\right)^{k(\log p)^{2}}<e^{-\frac{k}{c}}<\frac{c}{k}<\varepsilon,
$$
for $k\gg (1/\varepsilon)$. Finally note that checking that none of them is isomorphic to the previous ones would take $O\left(k^2(\log n)^{4}\right)$ steps.

\section{Statements and Declarations}

The first author is supported by  the  Grant PID2019-110224RB-I00 of the Spanish Government. The authors have no relevant financial or non-financial interests to disclose. All authors contributed to the study, conception and design of the paper. All authors read and approved the final manuscript.
\end{document}